\documentclass[a4paper,11pt]{amsart}
\usepackage{amsmath,amsthm,amsfonts,amssymb}
\setlength{\parskip}{7pt}

\theoremstyle{plain}
\newtheorem*{thm}{Theorem}

\newtheorem*{que}{Question}

\newcommand{\Ker}{\operatorname{Ker}}

\newcommand{\Irr}{\operatorname{Irr}}

\title{On the number of constituents of \\ products of characters}

\author{Maria Loukaki}
\address{Department of Applied Mathematics\\
University of Crete\\
 Knosou Av. 74 GR-71409 Heraklion-Crete GREECE}
\email{loukaki@gmail.com}

\author{Alexander Moret\'o}
\address{Departament d'\`Algebra \\
Universitat de Val\`encia \\
46100 Burjassot. Val\`encia SPAIN}
 \email{Alexander.Moreto@uv.es}

 \thanks{The second author was supported by the FEDER,  the Spanish
Ministerio de Ciencia y Educaci\'on, grants MTM2004-04665 and
MTM2004-06067-C02-01, the Generalitat Valenciana and the Programa
Ram\'on y Cajal.\\
{\it Key words:} Products of faithful irreducible characters\\
{\it 2000 Mathematics Subject Classification}  Primary 20C15\\
Received by the editors April 12, 2005   and,  in revised form, January 9, 2006}

\begin{document}

\begin{abstract}It has been conjectured (see \cite{ada}) that if the number of
distinct irreducible constituents of the product of two faithful
irreducible characters of a finite $p$-group, for $p\geq5$,  is
bigger than $(p+1)/2$, then it is at least $p$. We give a
counterexample to this conjecture.
\end{abstract}

\maketitle

In this note, $p$ will stand for a prime number. Let $\varphi$ and
$\psi$ be faithful irreducible characters of a finite $p$-group $P$. What can be
said about the number of different irreducible constituents of the
product $\varphi\psi$? At first sight, it does not seem reasonable
to expect strong restrictions for the possible values of this
number. However, in \cite{ada} it was proved that if the number of
constituents of this product is bigger than one, then it is at least
$(p+1)/2$.

It seems reasonable to ask what further restrictions can be found.
In p. 237 of \cite{ada} it was conjectured that if the number of
constituents of the product of two faithful irreducible characters of a finite
$p$-group, for $p\geq5$,  is bigger than $(p+1)/2$, then it is at
least $p$. The theorem that follows shows that this is not true.

\begin{thm}
Let $P=C_p\wr C_p$ for $p\geq5$. There exist
$\varphi,\psi\in\Irr(P)$ faithful such that $\varphi\psi$ has
exactly $p-1$ distinct irreducible constituents.
\end{thm}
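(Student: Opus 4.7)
The plan is to exhibit $\varphi,\psi$ explicitly and count the constituents of $\varphi\psi$ by restricting to the base group of the wreath product. I would write $P=N\rtimes\langle t\rangle$ with $N=\langle a_0,\ldots,a_{p-1}\rangle\cong C_p^p$ and $ta_it^{-1}=a_{i+1}$, and identify $\Irr(N)$ with $\mathbb{F}_p^p$ via $\lambda_{\mathbf{j}}(a_i)=\omega^{j_i}$, where $\omega=e^{2\pi i/p}$, so that the $\langle t\rangle$-action on $\Irr(N)$ is a cyclic shift $\sigma$. Clifford theory identifies the irreducible characters of $P$ of degree $p$ with the non-fixed $\sigma$-orbits via $\chi_{\mathbf{j}}:=\lambda_{\mathbf{j}}^{P}$, and a short circulant argument (exploiting $x^p-1=(x-1)^p$ in $\mathbb{F}_p[x]$) shows that $\chi_{\mathbf{j}}$ is faithful exactly when $\sum_i j_i\neq 0$ in $\mathbb{F}_p$.

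With this set-up, I would choose $\mathbf{j}=e_0$ and $\mathbf{k}=e_0+e_1$, so that $\varphi=\chi_{\mathbf{j}}$ and $\psi=\chi_{\mathbf{k}}$ are both faithful (their coordinate sums $1$ and $2$ are nonzero mod $p$ for $p\geq 5$), and compute
\[
(\varphi\psi)|_N \;=\; \sum_{g_1,g_2\in\langle t\rangle}\lambda_{\mathbf{j}}^{g_1}\,\lambda_{\mathbf{k}}^{g_2} \;=\; \sum_{h=0}^{p-1}\bigl(\text{$\sigma$-orbit sum of }\lambda_{\mathbf{v}_h}\bigr),\qquad \mathbf{v}_h:=\mathbf{j}+\sigma^h\mathbf{k}=e_0+e_h+e_{h+1}.
\]
None of the $\mathbf{v}_h$ is a constant vector, so every orbit has size $p$, each orbit sum equals $\chi_{\mathbf{v}_h}|_N$, and no linear character of $P$ appears in $\varphi\psi$. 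The number of distinct irreducible constituents of $\varphi\psi$ therefore equals the number of distinct $\sigma$-orbits among $\mathbf{v}_0,\ldots,\mathbf{v}_{p-1}$.

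The combinatorial heart of the argument is this orbit count. For $h\in\{0,p-1\}$ the vector $\mathbf{v}_h$ has two-element support, giving $2e_0+e_1$ and $2e_0+e_{p-1}$; these lie in different $\sigma$-orbits, since matching the coefficients $2$ and $1$ would force $p=2$. For $h\in\{1,\ldots,p-2\}$ the support is the three-element set $\{0,h,h+1\}\subset\mathbb{Z}/p\mathbb{Z}$ with cyclic gap pattern $(h,1,p-1-h)$; checking the three possible cyclic rotations shows that this pattern agrees with $(h',1,p-1-h')$ if and only if $h=h'$ or $\{h,h'\}=\{1,p-2\}$. Hence these $p-2$ values of $h$ contribute exactly $p-3$ orbits, and the grand total is $(p-3)+2=p-1$, as desired.

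The only delicate step is this last necklace comparison, and the hypothesis $p\geq 5$ enters there: it is needed to ensure that $\{1,p-2\}$ is a pair of distinct elements, so the single orbit collision is genuine and reduces the naive count of $p$ orbits by exactly one (the repeated constituent being the degree-$p$ character attached to the orbit of $e_0+e_1+e_2$, which therefore enters $\varphi\psi$ with multiplicity two while each of the remaining $p-2$ constituents enters with multiplicity one; a quick degree check $2p+(p-2)p=p^2$ confirms the decomposition).
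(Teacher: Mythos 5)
Your proof is correct and follows essentially the same route as the paper: the same choice of inducing characters $e_0$ and $e_0+e_1$ (the paper's $(1,0,\dots,0)$ and $(1,1,0,\dots,0)$), the same reduction to counting cyclic-shift orbits of the $p$ sum-vectors $e_0+e_h+e_{h+1}$, and the same single collision between the orbits of $h=1$ and $h=p-2$. Your gap-pattern necklace argument and the final degree check just make explicit what the paper dismisses as ``easy to see.''
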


\begin{proof}
Write $P=CA$, where $A$ is the base group, which is elementary
abelian of order $p^p$. It is clear that the non-linear characters
of $P$ are induced from characters of $A$. In particular, they
have degree $p$. Fix any non-principal character
$\lambda\in\Irr(C_p)$. Then any character of $A$ can be written in
the form $\nu=\lambda^{i_1}\times\cdots\times\lambda^{i_p}$ for
some integers $i_j=0,\dots,p-1$. Thus, we can identify the
character $\nu$ with the $p$-tuple $(i_1,\dots,i_p)$. It is clear
that $\nu^G\in\Irr(G)$ if and only if not all the $i_j$'s are
equal.

We have that $Z(P)=\{(x,\dots,x)\mid x\in C_p\}$ is the unique
minimal normal subgroup of $P$. Also, if $\nu^G\in\Irr(G)$, it
follows from Lemma 5.11 of \cite{isa} that $\nu^G$ is faithful if
and only if $(x,\dots,x)\not\in\Ker\nu^G$. Notice that if
$\lambda(x)=\varepsilon$ for a primitive $p$th root of unity, then
$$
\nu^G(x,\dots,x)=p\varepsilon^{i_1+\cdots+i_p}.
$$
Thus $\nu^G$ is faithful if and only if
$\sum_{j=1}^pi_j\not\equiv0\pmod{p}$.

Consider the characters of $A$ associated to the $p$-tuples
$(1,0,0,\dots,0)$ and $(1,1,0,\dots,0)$. They induce faithful
irreducible characters of $P$, $\varphi$ and $\psi$ respectively.
We claim that $\varphi\psi$ has $p-1$ distinct irreducible
constituents.

The character $\varphi_A$ decomposes as the sum of the characters
associated to $(1,0,0,\dots,0), (0,1,0,\dots,0),
(0,0,1,\dots,0),\dots,(0,0,0,\dots,1)$. We can argue similarly
with $\psi_A$. The product of two characters of $A$ corresponds to
the componentwise sum of the associated $p$-tuples and two
characters of $A$ are $P$-conjugate if and only we can go from the
$p$-tuple associated to one of the characters to the other by a
cyclic permutation of the components. Now, it is easy to see that
the number of constituents of the character $\varphi\psi$ is the
number of characters of $P$ lying over the characters of $A$
corresponding to the $p$-tuples $(2,1,0,0,\dots,0),
(1,2,0,0,\cdots,0), (1,1,1,0,\dots,0), (1,1,0,1,\dots,0),\dots,\\
 (1,1,0,0,\dots,1)$. Here we have $p$ different $p$-tuples. The
third of these $p$-tuples and the last one correspond to
$P$-conjugate characters of $A$, so they induce the same character
of $P$. It is easy to see that no other pair of $p$-tuples are
conjugate. The claim follows.
\end{proof}

We have been unable to find any example where the number of
constituents of the product of two faithful irreducible characters of a
$p$-group has more than $(p+1)/2$ distinct irreducible
constituents but less than $p-1$. So the following modification of
the conjecture could still be true.

\begin{que}
Let $\varphi$ and $\psi$ be faithful irreducible characters of a
finite $p$-group $P$. Assume that $\varphi\psi$ has more than
$(p+1)/2$ distinct irreducible constituents. Does it necessarily
have at least $p-1$ irreducible constituents?
\end{que}

\end{document}